\documentclass[10pt]{article}

\usepackage[T1]{fontenc}
\usepackage[utf8]{inputenc}
\usepackage[margin=1in]{geometry}
\usepackage{amsmath,amssymb,amsthm}
\usepackage{microtype}
\usepackage{xcolor}
\usepackage{listings}
\usepackage[hidelinks]{hyperref}

\hypersetup{
  pdftitle={An Explicit 82-Queen Covering of the 163 x 163 Board and Its Asymptotic Implication},
  pdfauthor={Yixiang Kong}
}

\newtheorem{theorem}{Theorem}
\newtheorem{proposition}[theorem]{Proposition}
\newtheorem{corollary}[theorem]{Corollary}

\newcommand{\Z}{\mathbb{Z}}
\newcommand{\multisetunion}{\mathbin{\uplus}}

\title{An Explicit 82-Queen Covering of the
163$\boldsymbol{\times}$163 Board and Its Asymptotic Implication}
\author{Yixiang Kong\\
\small Sydney Smart Technology College, Northeastern University\\
\small Qinhuangdao, Hebei, China}
\date{}

\begin{document}
\maketitle

\begin{abstract}
The queen's graph $Q_n$ has the squares of the $n\times n$ chessboard as
vertices, with adjacency defined by a common row, column, or diagonal.  We
give an explicit set of $82$ queens on $Q_{163}$.  In centered coordinates,
all queen coordinates are odd, every odd row and odd column is occupied
exactly once, and the occupied rows, columns, and diagonals satisfy the
conditions for a type A $1$-cover in the terminology of \"Osterg\r{a}rd and
Weakley.  A direct
independent verification checks every one of the
$163^2=26{,}569$ board squares and finds none uncovered.  Hence
$\gamma(Q_{163})\leq82$.  The Finozhenok--Weakley lower bound
$\gamma(Q_n)\geq\lceil n/2\rceil$, valid here, gives the matching inequality
and therefore $\gamma(Q_{163})=82$.  For this cover, the parameters defined by
\"Osterg\r{a}rd and Weakley are $e=16$, $f=15$, and $u=24$; the complete
difference- and sum-diagonal multisets are displayed below.  It consequently
also supplies an explicit finite input
to their amplification theorem for type A covers, giving
$\gamma(Q_N)\leq(17/33)N+O(1)$.  This last coefficient improves both the
earlier type A coefficient $69/133$ and the subsequent general coefficient
$101/195$ of Burger and Mynhardt.  The coordinates and a complete
standard-library Python verifier are included.
\end{abstract}

\section{Introduction}

Let $Q_n$ denote the queen's graph of the $n\times n$ chessboard.  Its
vertices are the board squares, and two distinct vertices are adjacent when
their squares share a row, column, or diagonal.  A dominating set in $Q_n$
is thus a placement of queens that occupies or attacks every square, and
$\gamma(Q_n)$ denotes the minimum size of such a placement.

Recent work of Rostami and Bright develops proof-producing SAT methods for
finite queen-domination instances~\cite{RostamiBright2025}.

Amplification results due to Weakley turn certain finite queen covers into
asymptotic upper bounds~\cite{Weakley2002,WeakleyErratum2004}.  We use the
precise formulation recorded by \"Osterg\r{a}rd and Weakley
in~\cite[Theorem~5, p.~4]{OstergardWeakley2001}.  Using this framework, they
exhibited a type A
$1$-cover of size $66$ on $Q_{131}$ and obtained
\begin{equation}\label{eq:old-bound}
  \gamma(Q_N)\leq \frac{69}{133}N+O(1).
\end{equation}
See~\cite[Theorem~5 and Corollary~6]{OstergardWeakley2001}.  Burger and
Mynhardt subsequently obtained the stronger general bound
\begin{equation}\label{eq:best-general-bound}
  \gamma(Q_N)\leq \frac{101}{195}N+O(1).
\end{equation}
See~\cite[Corollary~12]{BurgerMynhardt2003}.

We give a type A $1$-cover of size $82$ on $Q_{163}$.  The coordinate list is
an independently checkable certificate.  Its principal consequence is the
exact finite value
\[
  \gamma(Q_{163})=82
\]
and its type A structure also yields
\[
  \gamma(Q_N)\leq \frac{17}{33}N+O(1).
\]
The latter improves both the 2001 type A coefficient in~\eqref{eq:old-bound}
and the later general bound~\eqref{eq:best-general-bound}.
The construction was found by a SAT-based search with heuristic guidance,
and its properties are verified independently by the code in the appendix.
No part of the proof depends on the completeness of the search.

\section{The construction}

We identify the squares of $Q_{163}$ with
\[
  B=\{(x,y)\in\Z^2:-81\leq x,y\leq81\}.
\]
The following set $D\subseteq B$ contains $82$ squares.  The line breaks have
no mathematical significance.

\begin{center}
\begin{minipage}{0.97\linewidth}
\small
\begin{verbatim}
(-81,-41) (-79,-19) (-77, 31) (-75, 25) (-73, -9) (-71, 49)
(-69, 55) (-67,-39) (-65,-29) (-63,-51) (-61, 51) (-59,-67)
(-57, 15) (-55,-23) (-53, 27) (-51, 77) (-49,-69) (-47, 57)
(-45,-25) (-43,-43) (-41, 75) (-39,-71) (-37, 39) (-35,-45)
(-33,-57) (-31, 61) (-29,-33) (-27, 69) (-25,-77) (-23, 65)
(-21, -7) (-19, 11) (-17, 67) (-15, 53) (-13,-11) (-11, -5)
( -9, 13) ( -7, 19) ( -5,-61) ( -3,-17) ( -1,  9) (  1, -1)
(  3, 21) (  5,-63) (  7,-81) (  9,-21) ( 11,-15) ( 13,  7)
( 15, 59) ( 17, 73) ( 19, -3) ( 21,-75) ( 23,  5) ( 25,-59)
( 27, 79) ( 29, 37) ( 31,-49) ( 33, 81) ( 35, 23) ( 37, 41)
( 39, 63) ( 41,-79) ( 43,-73) ( 45,-31) ( 47, 47) ( 49,-55)
( 51, 35) ( 53,-47) ( 55, 71) ( 57,-35) ( 59,-13) ( 61,  1)
( 63,-65) ( 65, 17) ( 67,  3) ( 69, 29) ( 71,-53) ( 73, 45)
( 75,-37) ( 77, 33) ( 79, 43) ( 81,-27)
\end{verbatim}
\end{minipage}
\end{center}

Every coordinate appearing in $D$ is odd.  Moreover, the $x$-coordinates and
the $y$-coordinates are both exactly
\begin{equation}\label{eq:axis-set}
  \{-81,-79,\ldots,79,81\},
\end{equation}
each with multiplicity one.

For a square $(x,y)$, write $\delta=y-x$ and $\sigma=x+y$.  With
$\multisetunion$ denoting multiset union, the multiset of difference values
of $D$ is
\begin{equation}\label{eq:diff-multiset}
 \{2j:-16\leq j\leq16\}
 \multisetunion
 \{\pm(32+4j):1\leq j\leq24\}
 \multisetunion \{0\}.
\end{equation}
The multiset of sum values is
\begin{equation}\label{eq:sum-multiset}
 \{2j:-15\leq j\leq15\}
 \multisetunion
 \{\pm(30+4j):1\leq j\leq24\}
 \multisetunion \{-80,38,42\}.
\end{equation}
Thus the parameters defined by \"Osterg\r{a}rd and Weakley are
\begin{equation}\label{eq:parameters}
  (e,f,u)=(16,15,24),
\end{equation}
where the meanings of $e$, $f$, and $u$ are recalled formally in
Section~5.  The multiset identities show, in addition, that difference
diagonal $0$ occurs twice, sum diagonals $38$ and $42$ occur twice, and
$-80$ is an additional occupied sum diagonal.

\section{Verification}

A queen at $(a,b)$ covers $(x,y)$ exactly when at least one of
\[
 x=a,\qquad y=b,\qquad y-x=b-a,\qquad x+y=a+b
\]
holds.  The verification used here first checks the cardinality, the row and
column sets in~\eqref{eq:axis-set}, and the diagonal multisets
in~\eqref{eq:diff-multiset}--\eqref{eq:sum-multiset}.  It then evaluates the
four covering conditions for every $(x,y)\in B$.  Pseudocode for the final
step is:
\begin{lstlisting}
for x in range(-81, 82):
    for y in range(-81, 82):
        covered = (x in columns or y in rows
                   or y-x in differences or x+y in sums)
        assert covered
\end{lstlisting}
The complete executable verifier is given in Appendix~\ref{app:verifier}.
Its output is
\begin{verbatim}
PASS
queens: 82
distinct: 82
squares_checked: 26569
uncovered: 0
p_orthodox: True
even_even_diagonal_cover: True
e_f_u: (16, 15, 24)
type_A_1_cover: True
amplified_coefficient: 17/33
\end{verbatim}

\begin{proposition}\label{prop:dominating}
The set $D$ is a dominating set of $Q_{163}$.
\end{proposition}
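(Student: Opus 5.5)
The plan is to give a human-checkable argument instead of relying only on the exhaustive computer search. The argument uses the structural facts already recorded: the axis set \eqref{eq:axis-set}, and the diagonal multisets \eqref{eq:diff-multiset} and \eqref{eq:sum-multiset}. The computer check of Section~3 then serves as a confirmation rather than the proof itself.

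First I will dispose of every square with an odd coordinate. If $x$ is odd, then $x\in\{-81,\ldots,81\}$, so by \eqref{eq:axis-set} some queen of $D$ lies in column $x$ and covers $(x,y)$. The same argument handles odd $y$ via the rows. It therefore remains to cover the squares with both coordinates even, that is $x,y\in\{-80,\ldots,80\}$. For these squares, $\delta=y-x$ and $\sigma=x+y$ are even, satisfy $|\delta|,|\sigma|\le160$, and obey $\delta\equiv\sigma\pmod 4$. The last fact holds because $\sigma-\delta=2x$ and $x$ is even.

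For the even--even squares I read off from \eqref{eq:diff-multiset} which values are occupied. Every even $\delta$ with $|\delta|\le 32$ is occupied. Above that range, the occupied values are $\pm(32+4j)$ for $1\le j\le24$, which are exactly the values $|\delta|\equiv 0\pmod 4$ with $36\le|\delta|\le128$. Similarly, by \eqref{eq:sum-multiset}, every even $\sigma$ with $|\sigma|\le30$ is occupied. Above that range, the occupied values are $|\sigma|\equiv 2\pmod 4$ with $34\le|\sigma|\le126$, together with $-80$, $38$ and $42$. I then split into cases on the residue $\delta \bmod 4$. If $\delta\equiv0\pmod 4$ and $|\delta|\le128$, the square is covered by its difference diagonal. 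If $\delta\equiv2\pmod4$, then $\sigma\equiv2\pmod 4$ as well. This square is covered by its sum diagonal when $|\sigma|\le126$, and by its difference diagonal when $|\delta|\le 32$. In the remaining case $\delta\equiv0\pmod 4$ with $|\delta|\le 32$, the difference diagonal already covers the square. So the only uncovered candidates are those with $|\delta|\ge 132$ and $\delta\equiv0\pmod 4$, or with $|\delta|\ge34$ and $|\sigma|\ge130$ and $\sigma\equiv\delta\equiv2\pmod 4$.

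The main obstacle is to rule out these residual corner squares, where both $|\delta|$ and $|\sigma|$ are large. I expect to handle them with the constraint $|\delta|+|\sigma|=2\max(|x|,|y|)\le160$. If $|\delta|\ge132$, then $|\sigma|\le 28$, so $\sigma$ is occupied. In the other residual case, $|\sigma|\ge130$ forces $|\delta|\le30$, which contradicts $|\delta|\ge34$. This is exactly the type A $1$-cover mechanism of Östergård and Weakley, with $e=16$, $f=15$, $u=24$. The key numerical inequality is $2e+2+4u=130$ combined with $2f+2=32$, and I must double-check these boundary numbers carefully, since this is where an off-by-two error would hide. As a safeguard, I note that the exhaustive check over all $26{,}569$ squares reported in Section~3 independently certifies the proposition, so the case analysis need only be cross-validated against it.
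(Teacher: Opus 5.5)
Your argument is correct, and it takes a genuinely different route from the paper's. The paper proves this proposition purely by computation: it appeals to the verifier's exhaustive test of all $163^2=26{,}569$ squares. It uses the axis set and diagonal multisets only later, for the type A and asymptotic claims.

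You instead derive domination from that structural data, in three steps:
\begin{itemize}
\item All odd rows and columns are occupied, so only even--even squares need diagonal coverage.
\item Reading off the occupied diagonals leaves only two candidate families: $\delta\equiv0\pmod 4$ with $|\delta|\ge132$, or $\delta\equiv\sigma\equiv2\pmod4$ with $|\delta|\ge34$ and $|\sigma|\ge130$.
\item The identity $|\delta|+|\sigma|=2\max(|x|,|y|)\le160$ disposes of both. The first family forces $|\sigma|\le28$, so the sum diagonal covers it; the second family is empty.
\end{itemize}
The thresholds $32+4\cdot24=128$ and $30+4\cdot24=126$ are right, and so is the mod-$4$ bookkeeping.

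What your approach buys is human checkability and an explanation of why the construction works. The certificate reduces to the axis permutation plus $82$ values each of $y-x$ and $x+y$, instead of $26{,}569$ coverage tests. Your case analysis is the \"Osterg\r{a}rd--Weakley type A mechanism specialized to $(e,f,u)=(16,15,24)$. It therefore also establishes by hand the even--even diagonal covering that the appendix checks by machine.

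What the paper's approach buys is independence from intermediate claims. The brute-force test does not rely on the displayed multisets being transcribed correctly, whereas your proof takes them from the paper rather than recomputing them.

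One cosmetic slip in your last paragraph concerns the thresholds. The ones actually in play are $2f+4u+4=130$ for sums against $2e+2=34$ for differences, and $2e+4u+4=132$ against $2f=30$. Your ``$2e+2+4u=130$'' agrees numerically only because $e=f+1$, and ``$2f+2=32$'' is not one of the bounds used. The case analysis itself uses the correct numbers, so nothing breaks.
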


\begin{proof}
The exhaustive check described above evaluates all $163^2=26{,}569$ squares
and returns no uncovered square.  Each comparison is an integer equality,
and the full certificate and verifier appear in this paper.
\end{proof}

It follows immediately that
\begin{equation}\label{eq:upper-163}
  \gamma(Q_{163})\leq82.
\end{equation}

\section{Optimality at order 163}

Finozhenok and Weakley established the lower bound
\begin{equation}\label{eq:FW-lower}
  \gamma(Q_n)\geq \left\lceil\frac n2\right\rceil
\end{equation}
for square queen graphs other than the exceptional orders $3$ and
$11$~\cite[Corollary~4, p.~299]{FinozhenokWeakley2007}.  Applying
\eqref{eq:FW-lower} at
$n=163$ gives
\[
  \gamma(Q_{163})\geq\left\lceil\frac{163}{2}\right\rceil=82.
\]
Together with~\eqref{eq:upper-163}, this proves the exact value.

\begin{theorem}
\[
  \gamma(Q_{163})=82.
\]
\end{theorem}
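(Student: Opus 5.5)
The equality $\gamma(Q_{163})=82$ splits into the two inequalities, and each is already available from an earlier result. The plan is simply to combine them.

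For the upper bound, I will invoke Proposition~\ref{prop:dominating}. It says the explicit $82$-element set $D$ dominates $Q_{163}$, which gives \eqref{eq:upper-163}, $\gamma(Q_{163})\le 82$. The only subtle point is to confirm that the $82$ listed coordinates are distinct squares of $B$, so that $|D|=82$ really counts queens.

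This is immediate from the structural facts recorded with the construction. The $x$-coordinates are exactly the $82$ odd values in \eqref{eq:axis-set}, each with multiplicity one. Hence the $82$ points are pairwise distinct, and all of them lie in $B$ because $|x|,|y|\le 81$. If one wants independence from the verifier, the covering claim itself can be re-derived by hand in structured form:
\begin{itemize}
\item every square with odd $x$ or odd $y$ is covered by a column or row, since all odd values occur;
\item it remains to check even--even squares $(x,y)$, for which $\delta=y-x$ and $\sigma=x+y$ are both even;
\item for these, one uses \eqref{eq:diff-multiset}--\eqref{eq:sum-multiset} to verify that for each such $(x,y)$ either $\delta$ lies in the difference set or $\sigma$ lies in the sum set.
\end{itemize}

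This last step is the only nontrivial part. It is the type A $1$-cover condition. Differences cover all even values in $[-32,32]$ and the values $\equiv 0 \pmod 4$ out to $\pm128$, while sums cover $[-30,30]$ and the values $\equiv 2 \pmod 4$ out to $\pm126$. A parity-mod-$4$ case check then shows every even--even square with $|x|,|y|\le 80$ is handled. For $x,y$ even, $\delta$ and $\sigma$ are congruent mod $4$, and $|\delta|+|\sigma|=2\max(|x|,|y|)\le160$. The extra entries $-80,38,42$ patch the boundary cases. I expect this bookkeeping to be the main obstacle if done by hand, but the proposition already certifies it exhaustively.

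For the lower bound, I will apply the Finozhenok--Weakley inequality \eqref{eq:FW-lower}. Its hypothesis is that $n\notin\{3,11\}$, which holds trivially for $n=163$. It gives $\gamma(Q_{163})\ge\lceil 163/2\rceil=82$. Combining the two inequalities yields $\gamma(Q_{163})=82$.
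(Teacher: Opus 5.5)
Your proposal is correct and matches the paper's proof. The upper bound $\gamma(Q_{163})\le 82$ comes from the dominating set $D$ of Proposition~\ref{prop:dominating}, and the lower bound from the Finozhenok--Weakley inequality~\eqref{eq:FW-lower} at $n=163$. One correction to your optional hand check: the mod-$4$ argument already closes with no boundary cases (if $|\delta|\ge132$ then $|\sigma|\le28$, and if $|\sigma|\ge130$ then $|\delta|\le30$), so the extra sum diagonals $-80,38,42$ patch nothing, and $38,42$ already lie among the values $\pm(30+4j)$.
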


\section{Asymptotic consequence}

We recall the definitions needed from \"Osterg\r{a}rd and
Weakley~\cite[Section~2, p.~3]{OstergardWeakley2001}.  For $p\in\{0,1\}$, a set of
squares is \emph{$p$-orthodox} if every row and column of parity $p$ contains
a square of the set.  A $1$-orthodox set is a \emph{$1$-cover} if every
even-even square shares a diagonal with a square of the set.  The two long
diagonals are difference diagonal $0$ and sum diagonal $0$.

Suppose that a set $D$ contains a square on each long diagonal.  Define
$e=e(D)$ to be the largest integer such that every difference diagonal $2i$
with $|i|\leq e$ is occupied, and define $f=f(D)$ analogously for sum
diagonals.  Define $u=u(D)$ to be the largest integer such that, for every
$1\leq i\leq u$, all four diagonals
\[
  \pm(2e+4i)\quad\hbox{(difference)},
  \qquad
  \pm(2f+4i)\quad\hbox{(sum)}
\]
are occupied.  By~\cite[Theorem~3 and the definition following it,
p.~4]{OstergardWeakley2001}, a $p$-orthodox set
with both long diagonals occupied is a \emph{type A $p$-cover} when
\begin{equation}\label{eq:type-A-condition}
  e+f\equiv p\pmod2,
  \qquad
  e+f+2u\geq\frac{n-5}{2}.
\end{equation}

We now verify these definitions directly for the displayed set $D$.
Equation~\eqref{eq:axis-set} shows that $D$ is $1$-orthodox.  Difference
diagonal $0$ has multiplicity two and sum diagonal $0$ has multiplicity one,
so both long diagonals are occupied.  From~\eqref{eq:diff-multiset}, every
even difference diagonal from $-32$ through $32$ is occupied, whereas
difference diagonals $\pm34$ are not; hence $e(D)=16$.  Similarly,
\eqref{eq:sum-multiset} shows that every even sum diagonal from $-30$ through
$30$ is occupied and sum diagonals $\pm32$ are not; hence $f(D)=15$.  Both
multisets contain the four required diagonals for every $1\leq i\leq24$, while
the next required values $\pm132$ (difference) and $\pm130$ (sum) are absent.
Thus $u(D)=24$.  Consequently
\[
  e+f\equiv1\pmod2,
  \qquad
  e+f+2u=16+15+48=79=\frac{163-5}{2}.
\]
Condition~\eqref{eq:type-A-condition} holds with $p=1$, and therefore $D$ is
a type A $1$-cover.  Directly, the same conclusion can be checked from the
definition: the appendix verifies that every even-even square shares a
diagonal with $D$.

The applicable branch of~\cite[Theorem~5, p.~4]{OstergardWeakley2001} states that
if a type A $1$-cover of $Q_n$, with $n\equiv-1\pmod4$, has size $d$ and
contains a square on each long diagonal, then
\begin{equation}\label{eq:amplification}
  \gamma(Q_N)\leq\frac{d+3}{n+2}N+O(1).
\end{equation}
The ``no edge squares'' hypothesis appearing in the second branch of that
theorem is not a hypothesis of this branch.  This distinction matters here:
$D$ contains four edge squares, but $p=1$ and $163\equiv-1\pmod4$, so the
first branch applies.  Finally, $|D|=82$.  Substitution into
\eqref{eq:amplification} gives
\[
  \frac{d+3}{n+2}
  =\frac{82+3}{163+2}
  =\frac{85}{165}
  =\frac{17}{33}.
\]

\begin{corollary}\label{cor:asymptotic}
As $N\to\infty$,
\[
  \gamma(Q_N)\leq\frac{17}{33}N+O(1).
\]
\end{corollary}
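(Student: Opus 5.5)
The corollary follows by feeding the certificate $D$ into the first branch of the amplification theorem \cite[Theorem~5, p.~4]{OstergardWeakley2001}, in the form~\eqref{eq:amplification}. I will check its hypotheses one at a time, using only facts already established for $D$, and then substitute $d=82$ and $n=163$.

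First, the order. Since $163=4\cdot41-1$, we have $n\equiv-1\pmod4$, so the relevant branch is the one with $p=1$.

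Second, $D$ is $1$-orthodox. In centered coordinates on $Q_{163}$, the rows and columns of parity $1$ are exactly the odd indices $-81,\dots,81$. By~\eqref{eq:axis-set}, each of these occurs as an $x$-coordinate and as a $y$-coordinate of $D$.

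Third, both long diagonals are occupied. By~\eqref{eq:diff-multiset}, difference diagonal $0$ occurs (twice), and by~\eqref{eq:sum-multiset}, sum diagonal $0$ occurs (once, as $2j$ with $j=0$).

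Fourth, the type A condition~\eqref{eq:type-A-condition} holds. I will read off $e$, $f$ and $u$ from the multisets:
\begin{itemize}
\item $e=16$, because every even difference diagonal in $[-32,32]$ is present and $\pm34$ are absent (the next entries are $\pm36$);
\item $f=15$, by the same reasoning for sums;
\item $u=24$, because the families $\pm(32+4i)$ and $\pm(30+4i)$ are listed exactly for $1\le i\le 24$.
\end{itemize}
Then $e+f=31\equiv1\pmod2$, and $e+f+2u=79=(163-5)/2$, so the inequality in~\eqref{eq:type-A-condition} holds with equality.

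The step I expect to need the most care is matching the hypotheses of the cited theorem exactly, rather than any of the arithmetic above. In particular, I must confirm that the ``no edge squares'' condition belongs only to the other branch. This matters because $D$ contains the edge squares $(-81,-41)$, $(7,-81)$, $(33,81)$ and $(81,-27)$. I would also cross-check that the cited definition of type A coincides with the one restated in Section~5, and that ``$1$-cover'' is consistent. As an independent check on the latter, the appendix verifier confirms directly that every even-even square shares a diagonal with $D$, which is the defining property of a $1$-cover.

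With the hypotheses in place, \eqref{eq:amplification} gives $\gamma(Q_N)\le \frac{d+3}{n+2}N+O(1)$. Substituting $d=82$ and $n=163$ yields $85/165=17/33$, which is the corollary.
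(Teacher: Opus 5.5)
Your proposal is correct and follows the paper's own argument essentially step for step. You check $1$-orthodoxy, occupancy of both long diagonals, and $(e,f,u)=(16,15,24)$, so that $e+f$ is odd and $e+f+2u=79=(163-5)/2$; you also confirm that the no-edge-squares hypothesis belongs only to the other branch of the cited Theorem~5 (relevant because of the four edge squares you list), and then substitute $d=82$, $n=163$ into $(d+3)/(n+2)=85/165=17/33$, exactly as in the paper.
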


Since
\[
  \frac{69}{133}-\frac{17}{33}=\frac{16}{4389}>0,
\]
Corollary~\ref{cor:asymptotic} improves the type A coefficient in
\eqref{eq:old-bound}.  It also improves the subsequent general coefficient
in~\eqref{eq:best-general-bound}, since
\[
  \frac{101}{195}-\frac{17}{33}=\frac{2}{715}>0.
\]

\section{Discussion}

The contribution is a finite, explicit certificate whose structure permits
the direct use of an existing amplification theorem.  The exact value at
order $163$ follows from the same certificate and an existing lower bound;
no computational nonexistence proof is needed.  The asymptotic conclusion
does not assert a new amplification theorem; it improves the published
Burger--Mynhardt coefficient by supplying an explicit finite input to the
existing type A amplification theorem.  We make no claim that the displayed
construction or its line pattern is unique.

The accompanying source archive contains the coordinate certificate in
\texttt{q163\_82\_queens.json} and the standard-library verifier
\texttt{verify\_q163\_82.py}.  The verifier embeds the displayed coordinates,
checks that the JSON copy is identical, and recomputes every claim used above.

\section*{AI-assisted tools disclosure}

The author used OpenAI Codex as an AI-assisted programming and research tool
in developing and testing the search and verification software, organizing
computational experiments, checking calculations, and assisting with
editorial preparation.  The explicit coordinate certificate and the complete
verification program are included to make the computational claims
independently reproducible.  The author takes responsibility for all
mathematical claims, citations, code, and conclusions in this manuscript.

\appendix
\section{Complete verification code}\label{app:verifier}

The following script uses only the Python standard library.  It contains the
certificate internally, verifies that the accompanying JSON file is identical,
and checks both its line data and direct domination of all $26{,}569$ squares.

\begin{lstlisting}[basicstyle=\ttfamily\tiny]
#!/usr/bin/env python3
"""Standard-library verifier for the Q_163 82-queen certificate."""
from collections import Counter
from fractions import Fraction
import hashlib
import json
from pathlib import Path

N, HALF, P = 163, 81, 1
E, F, U = 16, 15, 24
QUEENS = (
 (-81,-41),(-79,-19),(-77,31),(-75,25),(-73,-9),(-71,49),
 (-69,55),(-67,-39),(-65,-29),(-63,-51),(-61,51),(-59,-67),
 (-57,15),(-55,-23),(-53,27),(-51,77),(-49,-69),(-47,57),
 (-45,-25),(-43,-43),(-41,75),(-39,-71),(-37,39),(-35,-45),
 (-33,-57),(-31,61),(-29,-33),(-27,69),(-25,-77),(-23,65),
 (-21,-7),(-19,11),(-17,67),(-15,53),(-13,-11),(-11,-5),
 (-9,13),(-7,19),(-5,-61),(-3,-17),(-1,9),(1,-1),
 (3,21),(5,-63),(7,-81),(9,-21),(11,-15),(13,7),
 (15,59),(17,73),(19,-3),(21,-75),(23,5),(25,-59),
 (27,79),(29,37),(31,-49),(33,81),(35,23),(37,41),
 (39,63),(41,-79),(43,-73),(45,-31),(47,47),(49,-55),
 (51,35),(53,-47),(55,71),(57,-35),(59,-13),(61,1),
 (63,-65),(65,17),(67,3),(69,29),(71,-53),(73,45),
 (75,-37),(77,33),(79,43),(81,-27),
)

def require(condition, message):
    if not condition:
        raise ValueError(message)

def signed(values):
    result = {0}
    for value in values:
        result.update((value, -value))
    return result

def central_radius(lines):
    radius = 0
    while {2*(radius+1), -2*(radius+1)} <= lines:
        radius += 1
    return radius

def outer_radius(differences, sums, e, f):
    radius = 0
    while True:
        i = radius + 1
        if not ({2*e+4*i, -(2*e+4*i)} <= differences and
                {2*f+4*i, -(2*f+4*i)} <= sums):
            return radius
        radius += 1

def verify():
    require(N == 163 and N % 4 == 3, "incorrect board order")
    require(len(QUEENS) == len(set(QUEENS)) == 82,
            "incorrect queen count or duplicate square")
    require(all(-HALF <= x <= HALF and -HALF <= y <= HALF
                for x, y in QUEENS), "coordinate outside board")
    coordinates_odd = all(x % 2 and y % 2 for x, y in QUEENS)
    require(coordinates_odd, "a coordinate is not odd")

    axis = tuple(range(-81, 82, 2))
    xs, ys = [x for x, _ in QUEENS], [y for _, y in QUEENS]
    row_column_exact = (Counter(xs) == Counter(axis) and
                        Counter(ys) == Counter(axis))
    require(row_column_exact, "row or column multiset failed")

    required_diffs = signed(range(2, 2*E+1, 2))
    required_diffs |= signed(2*E+4*j for j in range(1, U+1))
    required_sums = signed(range(2, 2*F+1, 2))
    required_sums |= signed(2*F+4*j for j in range(1, U+1))
    expected_diffs, expected_sums = Counter(required_diffs), Counter(required_sums)
    expected_diffs.update([0])
    expected_sums.update([-80, 38, 42])
    actual_diffs = Counter(y-x for x, y in QUEENS)
    actual_sums = Counter(x+y for x, y in QUEENS)
    require(actual_diffs == expected_diffs, "difference multiset failed")
    require(actual_sums == expected_sums, "sum multiset failed")
    difference_excess = sorted((actual_diffs-Counter(required_diffs)).elements())
    sum_excess = sorted((actual_sums-Counter(required_sums)).elements())
    require(difference_excess == [0], "difference excess failed")
    require(sum_excess == [-80,38,42], "sum excess failed")

    columns, rows = set(xs), set(ys)
    differences, sums = set(actual_diffs), set(actual_sums)
    long_diagonals = 0 in differences and 0 in sums
    require(long_diagonals, "a long diagonal is unoccupied")
    computed_e, computed_f = central_radius(differences), central_radius(sums)
    computed_u = outer_radius(differences, sums, computed_e, computed_f)
    require((computed_e,computed_f,computed_u) == (E,F,U),
            "computed (e,f,u) failed")

    p_orthodox = row_column_exact
    evens = range(-80, 81, 2)
    even_even_cover = all(y-x in differences or x+y in sums
                           for x in evens for y in evens)
    type_a_arithmetic = ((computed_e+computed_f) % 2 == P and
        computed_e+computed_f+2*computed_u >= (N-5)//2)
    type_a_1_cover = (p_orthodox and long_diagonals and
                      even_even_cover and type_a_arithmetic)
    require(type_a_1_cover, "Type-A 1-cover verification failed")

    board = range(-HALF, HALF+1)
    uncovered = [(x,y) for x in board for y in board
        if not (x in columns or y in rows or
                y-x in differences or x+y in sums)]
    require(not uncovered, "uncovered squares: " + repr(uncovered[:10]))

    coefficient = Fraction(len(QUEENS)+3, N+2)
    require(P == 1 and N % 4 == 3 and long_diagonals,
            "Weakley Theorem 5 branch failed")
    require(coefficient == Fraction(17,33), "coefficient failed")

    certificate = Path(__file__).with_name("q163_82_queens.json")
    raw = certificate.read_bytes()
    data = json.loads(raw)
    json_queens = tuple(tuple(q) for q in data["queens"])
    require(json_queens == QUEENS, "JSON and embedded coordinates differ")
    expected_parameters = {"n":N, "e":E, "f":F, "u":U,
      "excess_difference_diagonal":0,
      "excess_sum_diagonals":[-80,38,42]}
    require(data["parameters"] == expected_parameters,
            "JSON and embedded parameters differ")
    edge_queens = sum(abs(x) == HALF or abs(y) == HALF for x,y in QUEENS)
    return {
      "queens":82, "distinct":82, "squares_checked":N*N,
      "uncovered":0, "coordinates_in_range_and_odd":coordinates_odd,
      "row_column_multiplicities":"each odd coordinate exactly once",
      "difference_lines":len(actual_diffs), "sum_lines":len(actual_sums),
      "difference_excess":difference_excess, "sum_excess":sum_excess,
      "long_diagonals_occupied":long_diagonals, "parity_parameter":P,
      "p_orthodox":p_orthodox, "even_even_squares_checked":81*81,
      "even_even_diagonal_cover":even_even_cover,
      "e_f_u":(computed_e,computed_f,computed_u),
      "type_A_1_cover":type_a_1_cover, "edge_queens":edge_queens,
      "applicable_Weakley_branch":"p=1, n=-1 (mod 4)",
      "amplified_coefficient":str(coefficient),
      "certificate_SHA256":hashlib.sha256(raw).hexdigest(),
    }

try:
    statistics = verify()
except Exception as exc:
    print("FAIL\nreason:", exc)
    raise SystemExit(1)
else:
    print("PASS")
    for key, value in statistics.items():
        print(key + ":", value)
\end{lstlisting}

\end{document}